\newtheorem{theorem}{Theorem}
\newtheorem*{thm}{Theorem}
\newtheorem*{proposition}{Proposition}
\newtheorem*{corollary}{Corollary}
\newtheorem{lemma}{Lemma}
\theoremstyle{definition}
\theoremstyle{remark}
\begin{document}

\title[]{The Hermite-Hadamard Inequality in higher dimensions }
\keywords{Hermite-Hadamard inequality, subharmonic functions, Brownian motion.}
\subjclass[2010]{26B25, 28A75, 31A05, 31B05, 35B50.} 
\thanks{This work is supported by the NSF (DMS-1763179) and the Alfred P. Sloan Foundation.}
\author[]{Stefan Steinerberger}
\address{Department of Mathematics, Yale University, New Haven, CT 06511, USA}
\email{stefan.steinerberger@yale.edu}

\begin{abstract} The Hermite-Hadamard inequality states that the average value of a convex function on an interval is bounded from above by the average value of the function at the endpoints of the interval. 
We provide a generalization to higher dimensions: let $\Omega \subset \mathbb{R}^n$ be a convex domain and let $f:\Omega \rightarrow \mathbb{R}$ be a convex function satisfying $f \big|_{\partial \Omega} \geq 0$, then
$$ \frac{1}{|\Omega|} \int_{\Omega}{f ~d \mathcal{H}^n} \leq  \frac{2 \pi^{-1/2} n^{n+1}}{|\partial \Omega|} \int_{\partial \Omega}{f~d \mathcal{H}^{n-1}}.$$
The constant $2 \pi^{-1/2} n^{n+1}$ is presumably far from optimal, however, it cannot be replaced by 1 in general. We prove slightly stronger estimates for the constant in two dimensions where we show that $9/8 \leq c_2 \leq 8$.
We also show, for some universal constant $c>0$, if $\Omega \subset \mathbb{R}^2$ is simply connected with smooth boundary, $f:\Omega \rightarrow \mathbb{R}_{}$ is subharmonic, i.e. $\Delta f \geq 0$, and $f \big|_{\partial \Omega} \geq 0$, then
$$ \int_{\Omega}{f~ d \mathcal{H}^2} \leq c \cdot \mbox{inradius}(\Omega)  \int_{\partial \Omega}{ f ~d\mathcal{H}^{1}}.$$
We also prove that every domain $\Omega \subset \mathbb{R}^n$ whose boundary is 'flat' at a certain scale $\delta$ admits a Hermite-Hadamard inequality for all subharmonic functions
with a constant depending only on the dimension, the measure $|\Omega|$ and the scale $\delta$.
\end{abstract}

\maketitle

\section{Introduction and main results}

\subsection{Introduction.} The Hermite-Hadamard inequality (a 1883 observation of Hermite \cite{hermite} but independently used by Hadamard \cite{hada} in 1893)
for convex $f:[a,b] \rightarrow \mathbb{R}$ states that
$$ \frac{1}{b-a} \int_{a}^{b}{f(x) dx} \leq \frac{f(a) + f(b)}{2}.$$
It is a very elementary consequence of the definition of convexity and has been refined and generalized in innumerable directions (one of the first generalizations is due to Fej\'{e}r \cite{fejer} in 1906).
We
refer to the monograph of Dragomir \& Pearce \cite{drago} collecting many results in that direction. Even though there is a vast abundance of papers on the subject (mathscinet lists over 500 papers
containing 'Hermite-Hadamard' in the title), there seems to be relatively little work outside of the one-dimensional setting. The strongest statement that one could hope for is
$$ \frac{1}{|\Omega|} \int_{\Omega}{f ~d \mathcal{H}^n} \leq \frac{1}{|\partial \Omega|} \int_{\partial \Omega}{f ~d \mathcal{H}^{n-1}},$$
where $\Omega \subset \mathbb{R}^n$ is a convex domain, $f:\Omega \rightarrow \mathbb{R}$ is a convex function that is positive on the boundary $f \big|_{\partial \Omega} \geq 0$ and $\mathcal{H}^k$ is the $k-$dimensional Hausdorff measure (and $| \cdot |$ is the $n-$dimensional and $(n-1)-$dimensional measure, respectively). It was proven for $\Omega = \mathbb{B}_3$ being the $3-$dimensional ball by Dragomir \& Pearce \cite{drago} and $\Omega = \mathbb{B}_n$ by de la Cal \& Carcamo \cite{cal1} (other proofs are given by de la Cal, Carcamo \& Escauriaza \cite{cal2} and Pasteczka \cite{past}). Various other special cases, among them the simplex \cite{bes, now1}, the disk \cite{drago1}, the square \cite{square}, triangles \cite{chen} and Platonic solids \cite{past} have been studied.
 However, as pointed out by Pasteczka \cite{past}, plugging in the affine functions $f(x) = x_i$ for $1\leq i \leq n$ shows that the inequality with constant 1 can only hold if the center of mass of $\Omega$ and $\partial \Omega$ coincide and will fail in general. Moreover, the same example also shows that an assumption like $f_{\partial \Omega} \geq 0$ is necessary: if the centers of mass do not coincide, then there is a linear function for which the two integrals are actually different and if the inequality is strict for $f$ then it fails for $-f$ (which is still linear and thus convex).
 We were attracted to the question because it deals with such a very basic intuition ('a convex function has a higher average on the boundary than on the inside'). We prove several estimates of this flavor, discuss the connection to themes in potential theory and partial differential equations and hope to popularize some of these problems.

\subsection{Results.}
We first state an inequality of Hermite-Hadamard type in higher dimensions; it is a direct analogue of the one-dimensional case and the only one where we obtain an
explicit constant.
\begin{theorem} Let $\Omega \subset \mathbb{R}^n$ be a convex domain and let $f:\Omega \rightarrow \mathbb{R}$ be a convex function such that $f \big|_{\partial \Omega} \geq 0$. Then we have the inequality
$$ \frac{1}{|\Omega|} \int_{\Omega}{f ~d \mathcal{H}^n} \leq \frac{2}{\sqrt{\pi}}\frac{n^{n+1}}{|\partial \Omega|} \int_{\partial \Omega}{f ~d \mathcal{H}^{n-1}}.$$
\end{theorem}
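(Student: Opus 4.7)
My plan is to combine a radial (cone) decomposition of $\Omega$ with one-dimensional convexity on rays, then extract the dimensional constant from geometric estimates (inscribed ball, John's theorem, cone formula).

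Concretely, I would fix a well-chosen interior point $x_0 \in \Omega$ and parametrize $\Omega$ by polar coordinates $x = x_0 + su$, where $u \in S^{n-1}$, $s \in [0, R(u)]$ with $R(u) = \sup\{t \ge 0 : x_0 + tu \in \Omega\}$. In these coordinates
\[
\int_\Omega f\, d\mathcal{H}^n = \int_{S^{n-1}} \int_0^{R(u)} f(x_0 + su)\, s^{n-1}\, ds\, du.
\]
One-dimensional convexity along each ray gives $f(x_0 + su) \le (1 - s/R(u))\, f(x_0) + (s/R(u))\, f(y(u))$, where $y(u) = x_0 + R(u)\, u \in \partial \Omega$. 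Evaluating the inner $s$-integrals (which yield $R(u)^n/(n(n+1))$ and $R(u)^n/(n+1)$ respectively) and invoking the Jacobian identity $R(u)^n\, du = (y - x_0) \cdot \nu(y)\, d\mathcal{H}^{n-1}(y)$, with $\nu$ the outer unit normal, produces the cone inequality
\[
\int_\Omega f\, d\mathcal{H}^n \;\le\; \frac{f(x_0)}{n+1}\,|\Omega| \;+\; \frac{1}{n+1}\int_{\partial\Omega} f(y)\,(y-x_0)\cdot \nu(y)\, d\mathcal{H}^{n-1}(y).
\]

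To close the argument I would dominate each term on the right by a dimensional multiple of $(|\Omega|/|\partial \Omega|) \int_{\partial\Omega} f\, d\mathcal{H}^{n-1}$. For the first, the subharmonic mean-value inequality (convex $\Rightarrow$ subharmonic) gives $f(x_0) \le \overline{f}_{\partial B(x_0, r)}$ for any inscribed ball, and a second round of ray-wise convexity then dominates this spherical mean by a weighted boundary integral. For the second, I would bound the geometric factor $(y - x_0) \cdot \nu(y)$ by $\diam(\Omega)$, combined with the cone formula $|\Omega| \ge r|\partial \Omega|/n$ and John's theorem, which (modulo an affine change of variables) sandwiches $\Omega$ between a ball $B(x_0, r)$ and the concentric dilate $nB(x_0, r)$. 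The exponent $n^{n+1}$ should emerge as the product of a single diameter factor $n$ and the $n^n$ inflation of the volume-to-surface area ratio under the John dilation, while the prefactor $2/\sqrt\pi$ is natural in an explicit Gaussian integral at the subharmonic / torsion-function step, consistent with the Brownian motion viewpoint alluded to in the abstract.

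The main obstacle is the genuine shape-dependence of the cone bound: the weight $(y - x_0) \cdot \nu(y)$ can approach $\diam(\Omega)$, which for elongated convex bodies far exceeds $|\Omega|/|\partial \Omega|$. Circumventing this seems to require an ellipsoidal reduction such as John's theorem, and since surface measure is \emph{not} affinely invariant (unlike volume), the bookkeeping under this reduction is delicate: I expect it is precisely the accumulation of affine Jacobian ratios that forces the concrete constant $2n^{n+1}/\sqrt\pi$ rather than a cleaner dimensional expression.
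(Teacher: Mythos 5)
Your cone inequality is correct, but the step you would need to close the argument is not just delicate bookkeeping -- it is false, and no affine reduction can repair it. Concretely, take the thin box $\Omega = (0,L)\times(0,\varepsilon)^{n-1}$ and $f(x) = \max\{x_1-(L-\delta),\,0\}$ with $\delta,\varepsilon$ fixed and small; $f$ is convex and nonnegative. Whatever interior point $x_0$ you fix, one of the two end facets satisfies $(y-x_0)\cdot\nu(y)\ge L/2$ (place the support of $f$ at that end, mirroring if necessary). Then the weighted boundary term in your cone bound already satisfies
$$ \frac{1}{n+1}\int_{\partial\Omega} f(y)\,(y-x_0)\cdot\nu(y)\,d\mathcal{H}^{n-1}(y) \;\ge\; \frac{1}{n+1}\cdot\frac{L}{2}\cdot\delta\,\varepsilon^{n-1} \;\longrightarrow\; \infty \quad (L\to\infty),$$
while the right-hand side of the theorem, $\frac{2n^{n+1}}{\sqrt{\pi}}\frac{|\Omega|}{|\partial\Omega|}\int_{\partial\Omega}f\,d\mathcal{H}^{n-1}$, stays bounded as $L\to\infty$ because $|\Omega|/|\partial\Omega|\to \varepsilon/(2(n-1))$ and $\int_{\partial\Omega}f = \delta\varepsilon^{n-1}+(n-1)\delta^2\varepsilon^{n-2}$ is independent of $L$. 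So the intermediate quantity you propose as an upper bound for $\int_\Omega f$ exceeds the target by a factor of order $L/\varepsilon$, and no subsequent estimate (in particular not $(y-x_0)\cdot\nu\le\diam(\Omega)$ plus $|\Omega|\ge r|\partial\Omega|/n$) can recover the theorem. John's theorem does not rescue this: the inequality to be proven is not affinely invariant, since $|\partial\Omega|$ and $\int_{\partial\Omega}f\,d\mathcal{H}^{n-1}$ transform under a non-isometric affine map by a position-dependent surface Jacobian whose oscillation is the aspect ratio of the map; proving the statement in John position therefore does not transfer back with a dimensional constant, which is exactly what the example detects. A smaller issue: your treatment of the $f(x_0)|\Omega|/(n+1)$ term is circular as written, since the spherical mean consists of interior values and a second round of ray-wise convexity reintroduces either $f(x_0)$ or precisely the boundary-density control that is the whole difficulty; this term, however, is easy to fix by taking $x_0$ to be the barycenter of $\partial\Omega$ with respect to $\mathcal{H}^{n-1}$ and applying Jensen's inequality.

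The paper avoids the diameter loss by transporting mass along \emph{parallel chords in a single well-chosen direction} rather than along rays emanating from one point: it takes the direction of the shortest semi-axis $a_n$ of the John ellipsoid $E\subseteq\Omega\subseteq nE$, so the push-forward of Lebesgue measure onto $\partial\Omega$ has density at most $na_n$ (Lemma 1), i.e.\ a width rather than a diameter; the comparison with $|\Omega|/|\partial\Omega|$ then only costs the factors $n^{n-1}$ from $|\partial\Omega|\le n^{n-1}|\partial E|$ and the surface area of $E$, estimated via Cauchy's projection formula and the maximal projection of an ellipsoid. In particular the factor $2/\sqrt{\pi}$ arises there as a ratio of Gamma functions, not from a Gaussian or Brownian-motion computation. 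If you want to keep a chord-type argument, the fix is to choose the chord direction adapted to the geometry so that the transported density at every boundary point is $\lesssim_n |\Omega|/|\partial\Omega|$ -- which is essentially the paper's Lemma 1.
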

We have not tried to optimize the constant since it is quite clear that our approach will not be able to yield the sharp result. A more precise analysis shows that we can prove
that the optimal constant $c_n$ in $n$ dimensions satisfies
$$ c_n \leq \left(1 + \frac{1}{4n} + o(n^{-1}) \right) n^{n+1}.$$
This is presumably far from optimal. 
What is the best possible constant and for which convex domain $\Omega$ and which function $f$ is it assumed? This may already be interesting in two dimensions where we
establish slightly improved estimates showing that $9/8 \leq c_2 \leq 8$.
An interesting special case  comes from setting $f(x) = x_i$ (which results in a comparison of the
$i-$th coordinate of the center of mass of $\Omega$ and the center of surface mass of $\partial \Omega$). 
\begin{corollary} Let $\Omega \subset \mathbb{R}^n_{ \geq 0}$ be a bounded, convex domain and let 
$$ m_{\Omega} = \frac{1}{| \Omega|} \int_{\Omega}{\textbf{x} ~dx} \qquad \mbox{and} \qquad m_{\partial \Omega} = \frac{1}{|\partial \Omega|} \int_{\partial \Omega}{\textbf{x} ~dx}$$
denote the centers of mass of $\Omega$ and the center of mass of $\partial \Omega$, respectively. Then
$$ \| m_{\Omega} \| \leq \frac{2 n^{n+1}}{\sqrt{\pi}} \| m_{\partial \Omega}\|.$$
\end{corollary}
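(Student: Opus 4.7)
The Corollary is essentially a direct application of Theorem 1, so the plan is short. The idea is to test the inequality of Theorem 1 on the coordinate functions $f_i(x) = x_i$ for each $1 \le i \le n$. Each $f_i$ is linear and therefore convex, and since $\Omega \subset \mathbb{R}^n_{\geq 0}$, we have $f_i \geq 0$ on all of $\Omega$, and in particular $f_i \big|_{\partial \Omega} \geq 0$, so the hypotheses of Theorem 1 are satisfied.

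Applying Theorem 1 to each $f_i$ yields, for every $1 \le i \le n$,
$$ (m_\Omega)_i = \frac{1}{|\Omega|} \int_{\Omega} x_i \, d\mathcal{H}^n \leq \frac{2 n^{n+1}}{\sqrt{\pi}} \cdot \frac{1}{|\partial \Omega|} \int_{\partial \Omega} x_i \, d\mathcal{H}^{n-1} = \frac{2 n^{n+1}}{\sqrt{\pi}} (m_{\partial \Omega})_i. $$
Thus the coordinates of $m_\Omega$ and $m_{\partial \Omega}$ are controlled entrywise with the same constant.

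The final step is to pass from this coordinatewise inequality to a comparison of Euclidean norms. Here the assumption $\Omega \subset \mathbb{R}^n_{\geq 0}$ is crucial because it guarantees that both $m_\Omega$ and $m_{\partial \Omega}$ lie in the nonnegative orthant, so that all coordinates in the inequality above are nonnegative. Squaring and summing over $i$ then gives
$$ \|m_\Omega\|^2 = \sum_{i=1}^n (m_\Omega)_i^2 \leq \left(\frac{2 n^{n+1}}{\sqrt{\pi}}\right)^2 \sum_{i=1}^n (m_{\partial \Omega})_i^2 = \left(\frac{2 n^{n+1}}{\sqrt{\pi}}\right)^2 \|m_{\partial \Omega}\|^2, $$
and taking square roots yields the claimed bound.

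There is no real obstacle here — the only subtle point is recognizing that the sign hypothesis $\Omega \subset \mathbb{R}^n_{\geq 0}$ plays two distinct roles: it ensures the applicability of Theorem 1 (nonnegativity of $f_i$ on $\partial \Omega$), and it allows one to square the coordinatewise inequalities without flipping any signs. If one dropped the nonnegativity, the coordinatewise inequalities would still hold, but they would no longer imply a bound on $\|m_\Omega\|$ in terms of $\|m_{\partial \Omega}\|$.
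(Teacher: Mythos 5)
Your proof is correct and matches the paper's argument: the corollary is obtained exactly by applying Theorem 1 to the coordinate functions $f_i(x)=x_i$ (admissible since they are convex and nonnegative on $\partial\Omega$ thanks to $\Omega\subset\mathbb{R}^n_{\geq 0}$) and then passing to norms coordinatewise. Your remark about the two roles of the sign hypothesis is exactly the point the paper alludes to when it says the bound comes from treating each coordinate separately.
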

The inequality in the Corollary is presumably also far from sharp (since it is obtained from assuming that an extremal domain is extremal in each coordinate); what is the sharp form? Can the
extremal domains be characterized?\\
There is a line of reasoning that generalizes Hermite-Hadamard inequalities to domains that are not convex and to functions which are not convex but merely subharmonic, i.e. satisfying
$ \Delta f  \geq 0.$
This idea goes back to Niculescu \& Persson \cite{choquet} (see also \cite{cal2, nicu2}): let $\Omega \subset \mathbb{R}^n$ have a smooth boundary and consider the equation
\begin{align*}
\Delta \phi &= 1 \qquad \mbox{in}~\Omega\\
\phi &= 0 \qquad \mbox{on}~\partial \Omega.
\end{align*}
Then, if  $f$ is subharmonic, i.e. $\Delta f \geq 0$, then we have after an integration by parts
$$ \int_{\Omega}{f~ d \mathcal{H}^n} \leq \int_{\partial \Omega}{ (\nabla \phi \cdot n) f~ d \mathcal{H}^{n-1}},$$
where $n$ is the normal vector on the boundary (pointing outside).
The function $\phi$ arises naturally in the expected lifetime of Brownian motion inside a domain, is of intrinsic interest \cite{makar, payne, payne2, stein} and deeply tied to classical potential theory. We use this connection to prove a fairly general Hermite-Hadamard inequality in two dimensions.

\begin{theorem} There is a universal constant $c>0$ such that for all simply connected domains $\Omega \subset \mathbb{R}^2$ with smooth boundary and all $f:\Omega \rightarrow \mathbb{R}_{}$ satisfying $\Delta f \geq 0$ as well as $f\big|_{\partial \Omega} \geq 0$,
$$ \int_{\Omega}{f~ d \mathcal{H}^2} \leq c \cdot \emph{inradius}(\Omega) \int_{\partial \Omega}{ f ~d\mathcal{H}^{1}}.$$
\end{theorem}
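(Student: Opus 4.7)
By the Niculescu--Persson identity recalled in the introduction, writing $\phi$ for the torsion function of $\Omega$ (the unique solution of $\Delta\phi=1$ in $\Omega$ with $\phi\big|_{\partial\Omega}=0$), it suffices to establish the pointwise estimate
\begin{equation*}
\nabla\phi(p)\cdot n(p)\leq c\cdot \mathrm{inradius}(\Omega),\qquad p\in\partial\Omega.
\end{equation*}
The remaining work is entirely to control the outward normal derivative of $\phi$ at the boundary, uniformly in $p$, in terms of the inradius $r$.

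My approach combines two classical ingredients. The first is the Hayman--Makai inradius bound for simply connected planar domains, $\|\phi\|_{L^\infty(\Omega)}\leq C_0 r^2$, which is precisely where the assumption that $\Omega$ is simply connected enters in an essential way. Probabilistically, if $\tau$ denotes the Brownian exit time, this says $\mathbb E_x[\tau]\leq 2C_0 r^2$ for every $x\in\Omega$, and $-\phi(x)=\tfrac12\mathbb E_x[\tau]$. The second ingredient is a boundary decomposition: for $p\in\partial\Omega$ and $x_\epsilon:=p-\epsilon\,n(p)$,
\begin{equation*}
\nabla\phi(p)\cdot n(p)=\lim_{\epsilon\to 0^+}\frac{\mathbb E_{x_\epsilon}[\tau]}{2\epsilon}.
\end{equation*}
Set $\rho:=r$, let $U:=\Omega\cap B_\rho(p)$, and let $\sigma$ be the exit time from $U$. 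The strong Markov property at time $\sigma$ gives
\begin{equation*}
\mathbb E_{x_\epsilon}[\tau]=\mathbb E_{x_\epsilon}[\sigma]+\mathbb E_{x_\epsilon}\!\left[\mathbf{1}_{\{X_\sigma\in\partial B_\rho(p)\}}\,\mathbb E_{X_\sigma}[\tau]\right].
\end{equation*}
The first term is $\leq C_1\epsilon\rho$ by comparing the torsion function of $U$ with that of a half-disk of radius $\rho$ (the half-disk's torsion value at depth $\epsilon$ scales as $\epsilon\rho$, by dilation invariance plus Hopf's lemma). The second term is $\leq C_2\epsilon r^2/\rho$, using Beurling's projection theorem applied to $\Omega^c\cap B_\rho(p)$ to get the escape-probability bound $\mathbb P_{x_\epsilon}(X_\sigma\in\partial B_\rho(p))\lesssim \epsilon/\rho$, combined with the Hayman--Makai bound applied at the new starting point $X_\sigma$. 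Choosing $\rho=r$ balances the two contributions to $O(\epsilon r)$, and dividing by $2\epsilon$ and letting $\epsilon\to 0^+$ yields the desired bound.

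The main obstacle is to make the two local estimates fully uniform in $p$ and in the shape of $\Omega$ at scale $r$. When the curvature of $\partial\Omega$ at $p$ is much larger than $1/r$, the set $\Omega\cap B_r(p)$ is not approximately a half-disk, and the direct half-disk comparison for the torsion function has to be carried out with more care. Beurling's projection theorem handles the harmonic-measure side cleanly because it only uses that $\Omega^c\cap B_\rho(p)$ contains a short radial segment emanating from $p$, which is automatic for smooth boundaries; the torsion comparison at scale $r$ is the truly delicate point. A robust way to execute it is to pass to the unit disk via the Riemann map $\Phi:\mathbb D\to\Omega$, use the identity $\Delta(\phi\circ\Phi)=|\Phi'|^2$, and combine Koebe distortion estimates with a weighted $L^1$-bound on the Poisson kernel. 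This is where I expect most of the real technical work to go.
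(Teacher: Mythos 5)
Your reduction is, modulo language, the same as the paper's: for $y \in \partial\Omega$ one has $\nabla\phi(y)\cdot n(y) = \int_{\Omega} p(x,y)\,dx$ (differentiate $\phi(y)=-\int_{\Omega}G(x,y)\,dx$ and use symmetry of the Green's function), so the Niculescu--Persson bound you invoke and the de la Cal--C\'arcamo--Escauriaza bound used in the paper reduce Theorem 2 to the identical estimate $\sup_{y\in\partial\Omega}\int_{\Omega}p(x,y)\,dx \leq c\cdot \mathrm{inradius}(\Omega)$. The paper gets this from a potential-theoretic rigidity property of simply connected planar domains ($\mathcal{H}^1(B(y,\delta)\cap\partial\Omega)\gtrsim\delta$ for all $\delta$ up to the diameter), stated as a brief sketch rather than executed in detail. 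So structurally you are on the paper's route; the question is whether your proposed execution of the key pointwise estimate closes, and it does not as written.

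Two concrete gaps. First, Beurling's projection theorem does not give $\mathbb{P}_{x_\epsilon}(X_\sigma\in\partial B_\rho(p))\lesssim \epsilon/\rho$; it gives $\lesssim\sqrt{\epsilon/\rho}$, and the square root is sharp --- the extremal configuration is precisely a radial slit, which is all that ``a connected piece of $\Omega^c$ joining $p$ to $\partial B_\rho(p)$'' guarantees. With the correct exponent your second term contributes $\sqrt{\epsilon/\rho}\,r^2/(2\epsilon)\to\infty$ as $\epsilon\to0$, so the strong Markov decomposition does not survive the limit; an $O(\epsilon)$ escape probability requires quantitative flatness of $\partial\Omega$ near $p$ at scale $\rho$, which smoothness provides only with constants depending on the domain, not on $r$. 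Second, the local comparison $\mathbb{E}_{x_\epsilon}[\sigma]\lesssim\epsilon\rho$, which you flag as delicate, in fact fails uniformly: take $\Omega$ to be a unit disk from which a smooth needle-shaped piece of width $s\ll 1$ has been removed, attached to the outer boundary, and let $p$ be the rounded tip. Then $\phi$ vanishes on the needle and behaves like $\sqrt{d}$ at distance $d$ from the tip for $s\lesssim d\lesssim 1$, whence $\nabla\phi(p)\cdot n(p)\asymp s^{-1/2}$ while $\mathrm{inradius}(\Omega)\asymp 1$; equivalently, the harmonic measure density at the tip is of order $s^{-1/2}$ (the arcsine-law singularity at the end of a slit). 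So the pointwise target $\nabla\phi\cdot n\leq c\cdot\mathrm{inradius}$ cannot hold with a constant independent of the fine structure of the boundary, and no Riemann-map/Koebe bookkeeping will produce it; the large values of the kernel live on boundary arcs of length $\asymp s$, so any repair must integrate over $\partial\Omega$ and exploit the shortness of the bad set, or impose a flatness scale as in Theorem 4, rather than take a supremum. Be aware that this same difficulty sits inside the corresponding (sketched) step of the paper's own argument, so you should not expect to resolve it merely by filling in your two lemmas.
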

This estimate is easily seen to be sharp up to the constant by taking constant functions on a disk. We observe that taking $f \equiv 1$ implies
$ |\Omega| \lesssim \mbox{inradius}(\Omega) |\partial \Omega|$
for simply connected domains (here and henceforth we use $\lesssim$ and $\gtrsim$ to indicate the presence of a universal constant depending at most on the dimension). Combined with the elementary estimate $ \mbox{inradius}(\Omega) \lesssim |\Omega|^{1/2}$, this implies the isoperimetric inequality $|\partial \Omega| \gtrsim |\Omega|^{1/2}$ with a non-sharp constant.
We do not know what the sharp constant $c$ could be but some heuristic arguments suggest that the range $0.5 \leq c \lesssim 5$ should be a somewhat reasonable guess.
Moreover, the condition that $\Omega$ be simply connected is necessary: consider the annulus
$\Omega = B(0,1) \setminus B(0, \varepsilon)$ and the (harmonic) function $f(x) = -\log{|x|}$ on $\Omega$. We see that, for $\varepsilon$ small,
$$  \int_{\Omega}{f~ d \mathcal{H}^2} \sim  \frac{1}{4} \qquad \mbox{while} \qquad \mbox{inradius}(\Omega) \int_{\partial \Omega}{ f ~d\mathcal{H}^{1}} \sim \pi \varepsilon \log{\frac{1}{\varepsilon}}.$$
Taking $f \equiv -1$ shows that some restriction on the sign of the function is necessary, our proof shows that $f \geq 0$ on the boundary is sufficient to imply the result.
Theorem 2 can be interpreted as a refinement of the
maximum principle: the maximum principle states that the maximum value of a subharmonic function is assumed on the boundary. It also implies that if $f$ is of a certain size in the interior of
$\Omega$, then it has to be at least of comparable size on a nontrivial portion of the boundary. We also show such a result for convex domains in $n \geq 3$ dimensions.

\begin{theorem} Let $\Omega \subset \mathbb{R}^n$, $n \geq 3$, be a convex domain and let $f:\Omega \rightarrow \mathbb{R}_{}$ satisfy $\Delta f \geq 0$ as well as $f\big|_{\partial \Omega} \geq 0$. Then, for a constant $c_n$ depending only on the dimension, we have that
$$  \int_{\Omega}{f ~d \mathcal{H}^n} \leq c_n|\Omega|^{1/n}  \int_{\partial \Omega}{f ~d \mathcal{H}^{n-1}}.$$
\end{theorem}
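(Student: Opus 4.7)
The plan is to invoke the Niculescu--Persson torsion--function device described in the introduction. Let $\phi$ solve $\Delta \phi = 1$ in $\Omega$ with $\phi = 0$ on $\partial \Omega$, so that $\phi \leq 0$ by the maximum principle and $\nabla \phi \cdot n \geq 0$ on $\partial \Omega$. Green's identity applied to $\phi$ and $f$, combined with $\phi|_{\partial \Omega} = 0$, $\Delta f \geq 0$, $\phi \leq 0$, and $f|_{\partial \Omega} \geq 0$, gives
$$\int_\Omega f \, d\mathcal{H}^n = \int_{\partial \Omega} f \, (\nabla \phi \cdot n) \, d\mathcal{H}^{n-1} + \int_\Omega \phi \, \Delta f \, d\mathcal{H}^n \leq \int_{\partial \Omega} f \, (\nabla \phi \cdot n) \, d\mathcal{H}^{n-1}.$$
The theorem therefore reduces to the uniform boundary--gradient estimate $\|\nabla \phi \cdot n\|_{L^\infty(\partial \Omega)} \leq c_n |\Omega|^{1/n}$ for every convex $\Omega \subset \mathbb{R}^n$, and the rest of the work is devoted to this bound.

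To control $(\nabla \phi \cdot n)(y)$ at a fixed $y \in \partial \Omega$, I would use super--solution comparisons exploiting convexity. For any domain $D \supset \Omega$ admitting a bounded solution $\phi_D$ of $\Delta \phi_D = 1$, $\phi_D|_{\partial D} = 0$, with $y \in \partial D$ and the tangent hyperplane at $y$ matching that of $\partial \Omega$, the maximum principle forces $\phi \geq \phi_D$ in $\Omega$, and hence $|(\nabla \phi \cdot n)(y)| \leq |(\nabla \phi_D \cdot n)(y)|$. Two canonical choices are: (i) the smallest enclosing ball of radius $R(y)$ tangent to $\partial \Omega$ at $y$, yielding the bound $R(y)/n$; and (ii) an enclosing infinite cylinder of radius $\rho(y)$ whose axis is tangent to $\partial \Omega$ at $y$, yielding the bound $\rho(y)/(n-1)$. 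The maximum principle of Payne--Sperb, valid for convex $\Omega$, moreover guarantees $\|\nabla \phi\|_{L^\infty(\Omega)} = \|\nabla \phi \cdot n\|_{L^\infty(\partial \Omega)}$, so bulk gradient information can be transferred to the boundary.

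The main obstacle is then a convex--geometric claim: for every convex $\Omega \subset \mathbb{R}^n$ ($n \geq 3$) and every $y \in \partial \Omega$, a super--solution of one of the two types above can be arranged with defining parameter $\lesssim_n |\Omega|^{1/n}$. The heuristic is a dichotomy: if $\Omega$ has diameter $\lesssim_n |\Omega|^{1/n}$ the enclosing ball is effective at every $y$; if instead $\Omega$ is elongated along some direction $\xi$, Brunn--Minkowski applied to cross--sections perpendicular to $\xi$ forces $\Omega$ to fit into a tube of radius $\lesssim_n (|\Omega|/\mathrm{length}_\xi)^{1/(n-1)} \leq |\Omega|^{1/n}$, which is effective at generic boundary points. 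The genuinely delicate case is when $y$ is a ``tip'' of a long elongated body (e.g.\ the end of a sausage), where neither the straight enclosing ball nor a purely transverse tangent tube has the right scale; there one must either introduce a compound super--solution (for instance a half--infinite cylinder capped by a hemisphere) or combine the Payne--Sperb identity with Saint--Venant's bound $\|\phi\|_\infty \leq c_n |\Omega|^{2/n}$ to push the gradient estimate from the bulk of $\partial \Omega$ into the tip region. The hypothesis $n \geq 3$ enters essentially: the extra codimension leaves room for an effective tangent tube direction at most boundary points, something unavailable in dimension two and reflected in the very different simply--connected hypothesis of Theorem~2.
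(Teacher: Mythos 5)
Your reduction is sound, and in fact it is the paper's reduction in disguise: by Green's identity the weight $(\nabla\phi\cdot n)(y)$ for the torsion function is exactly $\int_{\Omega}p(x,y)\,dx$, the quantity the paper bounds via the de la Cal--Carcamo--Escauriaza inequality. So everything hinges, in both arguments, on the estimate $\sup_{y\in\partial\Omega}(\nabla\phi\cdot n)(y)\lesssim_n|\Omega|^{1/n}$ for arbitrary convex $\Omega$ --- and this is precisely what you do not prove. Your barrier dichotomy has two genuine problems. First, the second branch is false as stated: for a pancake-like body (a disk of radius $R$ and thickness $h\ll R$ in $\mathbb{R}^3$), the diameter direction $\xi$ is in-plane, the cross-sections perpendicular to $\xi$ have area $\sim Rh$, yet the body does not fit into any tube of radius $\sim(Rh)^{1/2}$ with axis along $\xi$, since it has extent $R$ in the other in-plane direction; so "elongated $\Rightarrow$ thin enclosing tangent tube" is not a consequence of Brunn--Minkowski. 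Second, you explicitly leave the tip case open, and the remedies you gesture at do not close it: the $P$-function fact you call the Payne--Sperb identity ($|\nabla\phi|^2-\tfrac{2}{n}\phi$ subharmonic, hence $\|\nabla\phi\|_{L^\infty(\Omega)}=\|\nabla\phi\cdot n\|_{L^\infty(\partial\Omega)}$) transfers boundary information into the bulk, not the other way, so it cannot "push the gradient estimate from the bulk of $\partial\Omega$ into the tip region"; and the Saint-Venant/Talenti bound $\|\phi\|_\infty\lesssim_n|\Omega|^{2/n}$ yields a boundary gradient bound only when combined with an exterior-ball or similar barrier at the matching scale, which a general convex body (corners, conical tips) need not provide. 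There is also a small mis-statement in branch (ii): a cylinder whose axis is tangent at $y$ contains $y$ on its axis, not on its lateral boundary, so the comparison at $y$ gives nothing; you need the lateral boundary tangent at $y$. In short, the heart of the theorem is replaced by a heuristic whose stated geometric claim fails and whose hard case is acknowledged but unresolved.

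For comparison, the paper avoids enclosing barriers altogether: after placing the supporting half-space at $y$, it majorizes $p_\Omega(\cdot,y)$ by the half-space Poisson kernel, applies the rearrangement inequality on each slice $\{x_n=\mathrm{const}\}$, and obtains the two bounds $\int_\Omega p(x,y)\,dx\lesssim\sup_c|\Omega\cap\{x_n=c\}|^{1/(n-1)}$ and $\int_\Omega p(x,y)\,dx\leq a$ (the height). Schwarz symmetrization preserves convexity, so $|\Omega|\gtrsim_n a\cdot\sup_c|\Omega\cap\{x_n=c\}|$, and interpolating the two bounds gives $|\Omega|^{1/n}$ uniformly --- in particular the sausage tip and the pancake rim are handled automatically, because the relevant quantity is the cross-sectional area in the normal direction at $y$, not the radius of a tangent enclosing tube. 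Note also that $n\geq3$ enters there through the convergence of $\int_0^\infty b\,c^{n-2}(b^2+c^2)^{-n/2}\,db=\tfrac{1}{n-2}$, not through "extra codimension for a tangent tube" as you suggest. If you want to salvage your route, the cleanest fix would be to prove (or cite with proof) the convex-domain gradient bound of Payne--Philippin type, $|\nabla\phi|^2\leq 2\|\phi\|_\infty$ on $\overline\Omega$, and combine it with Talenti's $\|\phi\|_\infty\lesssim_n|\Omega|^{2/n}$; but as written that ingredient is neither stated nor established.
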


We conclude with a rather general result that shows that Hermite-Hadamard inequalities are always possible on domains whose boundary is sufficiently flat at a certain scale.
We will say that
$\Omega$ has a boundary $\partial \Omega$ that is 'flat at scale $\delta$' if for every point $y \in \partial \Omega$ the set $B(y, \delta) \cap \partial \Omega$ can, possibly after a rotation, be locally
written as the graph of a differentiable function $\psi$ with derivative $| \nabla \psi| \leq 1/10$. There are certainly various other conditions of a similar flavor that could be imposed to obtain similar
results.

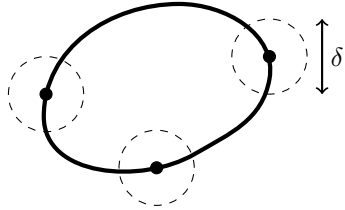
\begin{figure}[h!]
\begin{center}
\begin{tikzpicture}
\draw [ultra thick] (0,0) to[out=30, in=260] (1,1) to[out=80, in =0] (-0.2, 2) to[out=180, in =90] (-2, 0.5) to[out=270, in =210] (0,0);
\filldraw (-1.97, 0.8) circle (0.08cm);
\draw [dashed] (-1.97, 0.8) circle (0.5cm);
\filldraw (1, 1.3) circle (0.08cm);
\draw [dashed] (1, 1.3) circle (0.5cm);
\filldraw (-0.5, -0.18) circle (0.08cm);
\draw [dashed] (-0.5, -0.18) circle (0.5cm);
\draw [thick, <->] (1.7, 0.8) -- (1.7, 1.8);
\node at (1.9, 1.3) {$\delta$};
\end{tikzpicture}
\end{center}
\caption{Flatness at scale $\delta$: the boundary restricted to a $\delta-$ball centered at a point on the boundary is the graph of a $C^1$ function with small derivative.}
\end{figure}

\begin{theorem} Let $\Omega \subset \mathbb{R}^n$ have a smooth boundary that is flat at scale $\delta$. There exists a constant $c_n >0$ depending only on the dimension such that for all subharmonic $f:\Omega \rightarrow \mathbb{R}_{\geq 0}$ 
$$  \int_{\Omega}{f ~d \mathcal{H}^n} \lesssim_n   \delta^{n-1} \exp\left( \frac{ c_n |\Omega|^{2/n}}{\delta^2} \right)   |\Omega|^{-\frac{n-2}{n}}  \int_{\partial \Omega}{f ~d \mathcal{H}^{n-1}}.$$
\end{theorem}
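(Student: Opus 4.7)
The starting point is the integration-by-parts identity recalled in the introduction: letting $\phi$ be the torsion function of $\Omega$ (that is, the solution of $\Delta\phi=1$ in $\Omega$ with $\phi\big|_{\partial\Omega}=0$), we have, for any nonnegative subharmonic $f$,
$$
\int_\Omega f\,d\mathcal{H}^n \;\le\; \int_{\partial\Omega} (\nabla\phi\cdot n)\, f\,d\mathcal{H}^{n-1}.
$$
The weight $\nabla\phi\cdot n$ is nonnegative on $\partial\Omega$ because $\phi\le 0$ in $\Omega$ with $\phi=0$ on the boundary. Since $f\ge 0$, the theorem therefore reduces to a pointwise bound of the form
$$
\sup_{y\in\partial\Omega}|\nabla\phi(y)\cdot n(y)|\;\lesssim_n\;\delta^{n-1}\exp\!\left(\frac{c_n|\Omega|^{2/n}}{\delta^2}\right)|\Omega|^{-(n-2)/n}.
$$

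To obtain this pointwise bound, I would first invoke the classical Faber--Krahn/symmetrization estimate $\|\phi\|_{L^\infty(\Omega)}\le c_n|\Omega|^{2/n}$ on the torsion function. I then fix $y\in\partial\Omega$ and pass to local coordinates in which the outward normal at $y$ is $-e_n$ and $\partial\Omega\cap B(y,\delta)$ is the graph $\{x_n=\psi(x')\}$ with $\psi(0)=0$ and $|\nabla\psi|\le 1/10$. Setting $u=-\phi$, so that $\Delta u=-1$ in $\Omega$, $u\big|_{\partial\Omega}=0$, $u\ge 0$, and $\|u\|_\infty\le c_n|\Omega|^{2/n}$, the next step is to compare $u$ to an explicit barrier $v$ defined on the cylindrical slab
$$
S \;=\; \{|x'|<\delta/4\}\times(-\delta/40,\,H),\qquad H\sim|\Omega|^{1/n},
$$
with $v$ solving $\Delta v=-1$ on $S$ and taking the boundary data $v=0$ on the bottom face $\{x_n=-\delta/40\}$ and $v=\|u\|_\infty$ on the lateral and top faces. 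The choice of the slab bottom strictly below the graph of $\psi$ ensures that $\partial(\Omega\cap S)$ is covered by faces on which $v\ge u$, whence $v\ge u$ throughout $\Omega\cap S$ by the weak maximum principle applied to the harmonic function $v-u$.

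The barrier can then be analyzed by separation of variables: expanding in the Dirichlet eigenfunctions of the transverse disk (whose eigenvalues satisfy $\lambda_j\gtrsim\delta^{-2}$), each Fourier mode in the inhomogeneous part of $v$ decays longitudinally at rate $\sqrt{\lambda_j}\sim\delta^{-1}$. Tracking how the lateral boundary datum of size $\|u\|_\infty\sim|\Omega|^{2/n}$ propagates across a longitudinal scale $H\sim|\Omega|^{1/n}$, and extracting $\partial_n v$ at the origin after correcting for the fact that $v$ does not vanish exactly at $y$ (by subtracting off a suitable affine profile tangent to $v$ at the base of $S$), produces the quantitative factor $\delta^{n-1}\exp(c_n|\Omega|^{2/n}/\delta^2)|\Omega|^{-(n-2)/n}$.

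The main obstacle is the barrier step and the quantitative spectral bookkeeping that follows. The geometric subtlety is that the bottom face of $S$ is genuinely flat while $\partial\Omega\cap S$ is only approximately flat, so the $1/10$-Lipschitz bound on $\psi$ must be fed into the comparison to keep the barrier above $u$ on the tilted graph. The analytic subtlety is that $v(y)\ne 0$, so the pointwise gradient estimate requires a secondary refinement to convert the comparison $v\ge u$ into a genuine normal-derivative bound at $y$. Finally, the relative sizes of $H$, $\delta$, and $\|u\|_\infty$ must be balanced so as to reproduce the precise form of the constant stated in the theorem; once this pointwise estimate is in hand, substitution into the integration-by-parts identity from the first step immediately yields the claim.
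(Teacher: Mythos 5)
Your route is genuinely different from the paper's: you reduce to the elliptic, pointwise estimate $\sup_{y\in\partial\Omega}\partial_n\phi(y)\lesssim_n\delta^{n-1}\exp(c_n|\Omega|^{2/n}/\delta^2)|\Omega|^{-(n-2)/n}$ for the torsion function via the Niculescu--Persson identity, whereas the paper never proves any pointwise elliptic boundary-gradient bound: it runs the heat semigroup up to the finite time $t\sim\delta^2$, uses the symmetry $p_t(x,y)=p_t(y,x)$, a survival-probability lower bound obtained by comparison with the ball of the same volume (this is where $\exp(-c_n|\Omega|^{2/n}/t)$ comes from), and a boundary heat-flux bound $\int_\Omega q_t(x,z)\,dx\lesssim\sqrt t$ valid because the boundary is flat at scale $\sqrt t$. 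Your first step (the integration by parts, $\partial_n\phi\ge0$, and $\|\phi\|_\infty\le c_n|\Omega|^{2/n}$ by symmetrization) is fine, but be aware it is not a simplification: since $\partial_n\phi(y)=\int_\Omega p(x,y)\,dx$, testing the theorem with harmonic extensions of nonnegative boundary bumps shows the pointwise bound you are after is essentially \emph{equivalent} to the theorem, so the entire difficulty is deferred to the barrier step.

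That step, as described, has a genuine gap. Your barrier $v$ vanishes on the bottom face $\{x_n=-\delta/40\}$, which lies strictly below $y$, so $v(y)>0$; the comparison $u\le v$ in $\Omega\cap S$ then gives $u(y+se_n)\le v(y+se_n)\to v(y)>0$ and carries no information about $\partial_n u(y)$. The proposed repair (subtracting an affine profile tangent to $v$ at the base of $S$) does not work: after subtracting a profile $\ell$ that is positive near $y$, the function $v-\ell$ is no longer $\ge u=0$ on the graph portion of $\partial(\Omega\cap S)$, so the maximum principle comparison is lost. Nor can the bottom face be raised to pass through $y$: the hypothesis only gives $\psi\ge-|x'|/10$, so $\partial\Omega$ may dip linearly below the tangent plane at $y$, and any admissible barrier vanishing at $y$ would have to be nonnegative on such a tilted graph while staying superharmonic up to the $-1$ right-hand side; this is precisely the classical obstruction to pointwise Lipschitz (as opposed to H\"older) boundary estimates under a $C^1$-type hypothesis with no modulus of continuity on $\nabla\psi$, so no local comparison of the kind you sketch can produce a bound on $\partial_n\phi(y)$ depending only on $(n,\delta,|\Omega|)$. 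This is exactly the difficulty the paper's parabolic argument is designed to avoid, since there the flatness enters only through an integrated heat-flux estimate over times $t\lesssim\delta^2$ rather than through a normal derivative at a single boundary point. A further sign that the quantitative part has not been carried out: the separation-of-variables bookkeeping you describe (transverse eigenvalues $\gtrsim\delta^{-2}$, propagation over length $H\sim|\Omega|^{1/n}$) would produce factors of size $\exp(c|\Omega|^{1/n}/\delta)$, not the $\exp(c_n|\Omega|^{2/n}/\delta^2)$ appearing in the statement.
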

The estimate seems far from optimal but is not too far from the truth in some cases. If $B_R \subset \mathbb{R}^n$ is a ball of radius $R$, then $\delta \sim R$ and 
$$    \delta^{n-1} \exp\left( \frac{ c |\Omega|^{2/n}}{\delta^2} \right)   |\Omega|^{-\frac{n-2}{n}}   \sim R^{n-1} \exp\left( \frac{ cR^2}{R^2} \right) R^{2-n} \sim R$$
which is easily seen (by taking constant functions) to be optimal. Moreover, if we rescale a domain $\Omega$ by a factor $\lambda$, then $\delta$ is rescaled by the same factor and we see that the
bound scales also like $\lambda$ (thus respecting the symmetry under dilations).

\section{Proof of Theorem 1}
\subsection{Outline} Suppose $\phi:\Omega \rightarrow \mathbb{S}^{n-1}$ is a continuous map. For every point $x \in \Omega$, we can consider the intersection of the line $x + t \phi(x)$ with $\partial \Omega$. If $x$ is strictly inside the convex domain, then there are exactly two intersections with the boundary in $y_1, y_2 \in \partial \Omega$. There exists a $0 < t < 1$  
(the unique one solving $t y_1 + (1-t) y_2 = x$) such that
$$ f(x) \leq t f(y_1) + (1-t) f(y_2).$$
Our proof is based on interpreting this geometric fact as a mapping of the point $x$ to two (weighted) points $y_1, y_2$ on the boundary (the weights being $t$ and $1-t$). By integrating in a tiny neighborhood of $x$, we can interpret it as a way of transporting Lebesgue measure to the boundary via the mapping $\phi$. The main idea of our proof is now encapsulated in the following Lemma.

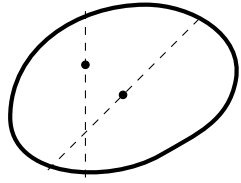
\begin{figure}[h!]
\begin{center}
\begin{tikzpicture}
\draw [ultra thick] (0,0) to[out=30, in=260] (1,1) to[out=80, in =0] (-0.2, 2) to[out=180, in =90] (-2, 0.5) to[out=270, in =210] (0,0);
\filldraw (-0.5, 0.8) circle (0.05cm);
\draw [dashed] (-1.5, -0.2) -- (0.5,1.8);
\filldraw (-1, 1.2) circle (0.05cm);
\draw [dashed] (-1, 1.2-1.5) -- (-1, 1.2+0.8);
\end{tikzpicture}
\end{center}
\caption{Associating a direction to every point resulting in two points on the boundary whose weights are determined by the location of the point in the interior.}
\end{figure}

\begin{lemma} If this particular push-forward of the normalized Lebesgue measure under $\phi$ gives rise to an absolutely continuous measure on the boundary whose Radon-Nikodym derivative with respect to $\mathcal{H}^{n-1}$ is bounded from above by $c$, then for all convex functions $f:\Omega \rightarrow \mathbb{R}$ for which $f \big|_{\partial \Omega} \geq 0$
$$ \int_{\Omega}{f ~d \mathcal{H}^n} \leq c \int_{\partial \Omega}{f~d \mathcal{H}^{n-1}}.$$
\end{lemma}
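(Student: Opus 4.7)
The plan is to execute the geometric picture given in the outline: convert the pointwise convexity inequality along each chord into an integral inequality via a push-forward measure on $\partial\Omega$. First, I would fix the continuous $\phi:\Omega\to\mathbb{S}^{n-1}$ and, for each $x\in\Omega$, define the two intersections $y_1(x),y_2(x)\in\partial\Omega$ of the line $x+\mathbb{R}\phi(x)$ with the boundary, together with the barycentric weight $t(x)\in(0,1)$ satisfying $x=t(x)y_1(x)+(1-t(x))y_2(x)$. Continuity of $\phi$ and convexity of $\Omega$ guarantee that $y_1,y_2$ and $t$ are continuous, hence Borel measurable, and the defining property of convexity yields the pointwise inequality
\[ f(x)\leq t(x)\,f(y_1(x))+(1-t(x))\,f(y_2(x)). \]

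Next, I would define a finite positive Borel measure $\mu$ on $\partial\Omega$ by prescribing
\[ \int_{\partial\Omega} g\, d\mu \;=\; \int_{\Omega}\bigl[t(x)\,g(y_1(x))+(1-t(x))\,g(y_2(x))\bigr]\,d\mathcal{H}^n(x) \]
for every continuous $g:\partial\Omega\to\mathbb{R}$; by the Riesz representation theorem this is a bona fide measure. The factor $t+(1-t)=1$ explains the word ``normalized'': each point $x$ contributes its Lebesgue mass, split between $y_1(x)$ and $y_2(x)$ in the ratio $t:(1-t)$. This $\mu$ is exactly the push-forward named in the hypothesis, so the assumption reads $d\mu/d\mathcal{H}^{n-1}\leq c$ a.e.\ on $\partial\Omega$.

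The proof then concludes in one step: integrating the convexity inequality over $\Omega$, then unpacking the definition of $\mu$ with $g=f|_{\partial\Omega}$, and finally inserting the density bound gives
\[ \int_{\Omega} f\,d\mathcal{H}^n \;\leq\; \int_{\partial\Omega} f\,d\mu \;\leq\; c\int_{\partial\Omega} f\,d\mathcal{H}^{n-1}. \]
The second inequality is where the assumption $f|_{\partial\Omega}\geq 0$ is used in an essential way, since a density bound cannot be upgraded to an inequality of integrals without a sign condition on the integrand. The proof itself is short and the only real obstacle is conceptual, namely identifying the correct measure-theoretic object whose Radon--Nikodym density encodes all of the geometric information about the chord decomposition. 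The genuine work, which presumably occupies the remainder of the section, is to choose $\phi$ cleverly and then estimate $d\mu/d\mathcal{H}^{n-1}$ pointwise in order to extract the explicit constant $2\pi^{-1/2}n^{n+1}$ appearing in Theorem~1.
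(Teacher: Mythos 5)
Your proof is correct and follows essentially the same route as the paper: integrate the pointwise convexity inequality along the chords determined by $\phi$ to obtain $\int_{\Omega} f\,d\mathcal{H}^n \leq \int_{\partial\Omega} f\,d\mu$, then use absolute continuity, the density bound, and the sign condition $f|_{\partial\Omega}\geq 0$. Your explicit construction of $\mu$ via the Riesz representation theorem is simply a cleaner formalization of the paper's sketch (which subdivides $\Omega$ into small cubes and passes to the limit), so there is nothing essentially different to compare.
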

This simple statement is actually intuitively used in a variety of settings (it is the natural statement one encounters when proving the Hermite-Hadamard inequality on $\Omega=[0,1]^2$), one could also reasonably re-interpret Pasteczka's proof for $\Omega = \mathbb{B}_n$ \cite{past} in this light.\\

This Lemma actually highlights a rather curious perspective on the problem: a natural way to construct upper bounds is to construct a map $\phi:\Omega \rightarrow \mathbb{S}^{n-1}$ such that the distribution of the push-forward of the Lebesgue measure is as flat as possible with respect to the boundary measure. 
In particular, if there exists a map $\phi$ moving the Lebesgue measure exactly to Hausdorff measure $\mathcal{H}^{n-1}$ on the boundary, then the Hermite-Hadamard inequality holds with
constant 1. This seems to connect the problem to question in optimal transport \cite{vi}.
 We also see a connection to Choquet theory (see \cite{nicu}) but only in a vague sense: we are not at all interested in having the measure on the boundary concentrated in the set of extreme points
but want it to be spread out everywhere. Nonetheless, the connection to Choquet theory may potentially be of interest for further developments. Another interesting connection, as already pointed out in \cite{cal2}
is the notion of convex ordering of random variables \cite{mu, sh}.

\subsection{Proof of Lemma 1}
\begin{proof} The proof is fairly straight-forward: we subdivide $\Omega$ into cubes of size $\varepsilon$ (which will later go to 0). We use the continuous map $\phi$ to define a line in $\Omega$ going through $x$ via
 $x + t \phi(x)$. As discussed above, if $x$ is strictly inside the convex domain, then there are exactly two intersections with the boundary in $y_1, y_2 \in \partial \Omega$ because of the convexity of $\Omega$. There exists a $0 < t < 1$  
(the unique one solving $t y_1 + (1-t) y_2 = x$) such that
$$ f(x) \leq t f(y_1) + (1-t) f(y_2).$$
We may interpret this as the integral of the Lebesgue measure of the cube being dominated by the push-forward of the measure on the boundary against the integral on the boundary. This can be carried out in every point and results in the following inequality: if the push-forward of the Lebesgue measure results in an measure $\mu$ on the boundary, then for all convex functions $f$ 
$$ \int_{\Omega}{f  d \mathcal{H}^n} \leq \int_{\partial \Omega}{f d\mu}.$$
Since $\mu$ is absolutely continuous with respect to the surface measure, we may write this as
$$ \int_{\partial \Omega}{f d\mu} = \int_{\partial \Omega}{f \left[ \frac{d \mu}{d \mathcal{H}^{n-1}} \right] d \mathcal{H}^{n-1}},$$
where the expression in the parentheses is the Radon-Nikodym derivative with respect to the $(n-1)-$dimensional surface measure.
Since $f$ is nonnegative on the boundary, we can estimate this from above by 
$$  \int_{\partial \Omega}{f d \mu} \leq \max_{x \in \partial \Omega}{ \left[ \frac{d \mu}{d \mathcal{H}^{n-1}} \right]}   \int_{\partial \Omega}{f d \mathcal{H}^{n-1}} = c \int_{\partial \Omega}{f d \mathcal{H}^{n-1}}$$
which is the desired statement.
\end{proof}

\subsection{Proof of Theorem 1.}
Our main ingredients besides Lemma 1 are (1) the John ellipsoid theorem
stating that for every convex domain $\Omega \subset \mathbb{R}^n$ there exists an ellipsoid $E$ such that
$$ E \subseteq \Omega \subseteq n E$$
and (2) a classical formula of Cauchy for the surface area of a convex body: for any unit vector
$v \in \mathbb{R}^n$, we use $\pi_v:\mathbb{R}^{n} \rightarrow \mathbb{R}^{n-1}$ to denote the projection onto the hyperplane orthogonal to $v$. Cauchy's formula then states that the surface
area of the boundary is given by
$$ \mathcal{H}^{n-1}( \partial \Omega) = \frac{1}{|\mathbb{B}^{n-1}|} \int_{\mathbb{S}^{n-1}}{ \mathcal{H}^{n-1}(\pi_v \Omega) dv}.$$
The use of Cauchy's formula could be bypassed since we are only using it to estimate the surface area of an ellipsoid which could conceivably be done by
various other means (some of which might end up resulting in slightly improved constants; however, we do not see how any approach of this flavor could possibly yield a sharp result). Cauchy's formula results in a particularly simple algebraic expression which is why we favor this approach for clarity of exposition.

\begin{proof}
We use the John ellipsoid theorem: there exists an Ellipsoid $E$ such that
$$ E \subseteq \Omega \subseteq n E.$$
Let us furthermore assume, after possibly rotating and translating both $E$ and the convex body, that the boundary of $E$ is described by the equation
$$ \frac{x_1^2}{a_1^2} + \frac{x_2^2}{a_2^2} + \dots + \frac{x_n^2}{a_n^2} = 1$$ 
where, without loss of generality, $ a_1 > a_2 > a_3 > \dots > a_n$.
 The volume of the ellipsoid is given by a scaling of the unit ball and thus 
$$ |E| = \frac{\pi^{n/2}}{\Gamma\left(\frac{n}{2} + 1\right)} \prod_{i=1}^{n}{a_i}.$$
We now use Lemma 1 and define $\phi \equiv e_n \chi_{x_n > 0}$ where $e_n$ is the unit vector $e_n = (0,0,\dots, 0,1)$. Then, by construction, the Radon-Nikodym derivative of the push-forward of the Lebesgue measure
is bounded from above by $n a_n$ for every point on the boundary $\partial \Omega$. Since we are working with normalized Lebesgue measure, we see that this allows us to use the
Lemma with 
$$ c = \frac{ n a_n}{|\Omega|} \leq \frac{ n a_n}{|E|} = n  \Gamma\left(\frac{n}{2} + 1\right) \pi^{-n/2}   \left( \prod_{i=1}^{n-1}{a_i} \right)^{-1}.  \qquad (\diamond)$$
If one convex set contains another, then the $(n-1)-$dimensional Hausdorff measure of their boundaries obey the same ordering (this follows neatly from the Cauchy formula) and thus
$$ \mathcal{H}^{n-1}(\partial \Omega) \leq \mathcal{H}^{n-1}(\partial n E)  = n^{n-1} \mathcal{H}^{n-1}(\partial E).$$
It remains to estimate the surface area of the ellipsoid from above. Cauchy's formula yields
$$ \mathcal{H}^{n-1}(\partial E) = \frac{1}{|\mathbb{B}^{n-1}|} \int_{\mathbb{S}^{n-1}}{ \mathcal{H}^{n-1}(\pi_v E) dv} \leq \frac{\mathcal{H}^{n-1}(\mathbb{S}^{n-1})}{\mathcal{H}^{n-1}(\mathbb{B}^{n-1})} \max_{v \in \mathbb{S}^{n-1}(E)} \mathcal{H}^{n-1}(\pi_v E).$$
A simple computation shows that
$$ \mathcal{H}^{n-1}(\mathbb{B}^{n-1}) =  \frac{\pi^{(n-1)/2}}{\Gamma\left(\frac{n+1}{2}\right)} \quad \mbox{and} \quad  \mathcal{H}^{n-1}(\mathbb{S}^{n-1}) =  \frac{2 \pi^{n/2}}{\Gamma(\frac{n}{2})}$$
which we estimate from above as 
$$ \frac{\mathcal{H}^{n-1}(\mathbb{S}^{n-1})}{\mathcal{H}^{n-1}(\mathbb{B}^{n-1})} = 2 \sqrt{\pi} \frac{\Gamma\left(\frac{n+1}{2}\right)}{\Gamma\left(\frac{n}{2}\right)} \leq \sqrt{2\pi n}.$$
and therefore
$$ \mathcal{H}^{n-1}(\partial E) \leq \sqrt{2\pi n} \max_{v \in \mathbb{S}^{n-1}(E)} \mathcal{H}^{n-1}(\pi_v E).$$
It remains to compute the maximal $(n-1)-$dimensional volume of a projection. One is naturally inclined to believe that that volume is maximized under projection in direction of the shortest axis: this is
correct, we refer to Connelly \& Ostro \cite{con} as well as Rivin \cite{rivin}. Therefore
$$ \mathcal{H}^{n-1}(\partial E) \leq \sqrt{2 \pi n} \frac{\pi^{\frac{n-1}{2}}}{\Gamma\left(\frac{n-1}{2} + 1\right)} \prod_{i=1}^{n-1}{a_i}.$$
Altogether, using $(\diamond)$ and simplifying the arising the expressions, we obtain a fairly simply estimate on the desired value
$$c \cdot \mathcal{H}^{n-1}(\partial \Omega) \leq    \sqrt{2n}  \frac{\Gamma\left(\frac{n+2}{2}\right)}{\Gamma\left(\frac{n+1}{2}\right)} n^{n}.$$
This expression can be estimated by
$$  \sqrt{2n}  \frac{\Gamma\left(\frac{n+2}{2}\right)}{\Gamma\left(\frac{n+1}{2}\right)} n^{n} \leq \frac{2  n^{n+1}}{\sqrt{\pi}} \qquad \mbox{with equality for}~n=2.$$
Moreover, we have the asymptotic behavior
$$   \sqrt{2n}  \frac{\Gamma\left(\frac{n+2}{2}\right)}{\Gamma\left(\frac{n+1}{2}\right)} n^{n} = \left(1 + \frac{1}{4n} + o(n^{-1}) \right) n^{n+1}.$$
\end{proof}

The proof yields a better constant if the domain $\Omega$ is symmetric, i.e. if $x \in \Omega$ implies $-x \in \Omega$: for symmetric
domains there is an improved constant in John's theorem stating
$$ E \subseteq \Omega \subseteq \sqrt{n} E.$$

\subsection{Explicit bounds for $n=2$.} This problem of finding precise constants seems to already be difficult for $n=2$ dimensions. We record the following slight improvement over the constant $c_2 \leq 9.02\dots$ that is given by Theorem 1 and derive a lower bound.

\begin{proposition}
There exists a universal constant 
$$ \frac{9}{8} < c_2 \leq 8$$
 such that if $\Omega \subset \mathbb{R}^2$ is a convex domain and $f:\Omega \rightarrow \mathbb{R}$ is a convex function that is positive on the boundary $f \big|_{\partial \Omega} \geq 0$, then
$$ \frac{1}{|\Omega|} \int_{\Omega}{f ~d \mathcal{H}^2} \leq \frac{c_2}{|\partial \Omega|} \int_{\partial \Omega}{f ~d \mathcal{H}^{1}}.$$
\end{proposition}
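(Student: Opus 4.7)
The plan is to prove the two bounds by essentially independent arguments. For the upper bound $c_2 \leq 8$, I would revisit the proof of Theorem~1 in the planar case and bypass Cauchy's formula, which is rather wasteful when $n = 2$. By John's theorem there is an ellipse $E$ with $E \subseteq \Omega \subseteq 2E$; after rotation we may assume $E$ is axis-aligned with semi-axes $a \geq b > 0$. Apply Lemma~1 with the constant direction $\phi \equiv e_2$ (the minor-axis direction). A direct chord computation shows that the Radon--Nikodym derivative of the resulting push-forward on $\partial \Omega$ is bounded, at each boundary point, by half the length of the vertical chord through that point, hence by $2b$ since $\Omega \subseteq 2E$ and $2E$ has vertical chords of length at most $4b$. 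Combining this with $|\Omega| \geq |E| = \pi a b$ and the elementary estimate $|\partial \Omega| \leq |\partial(2E)| \leq 2\pi(2a) = 4\pi a$ -- already strictly better for $n=2$ than what Cauchy's formula gave in the proof of Theorem~1 -- yields
\[
c_2 \leq 2b \cdot \frac{|\partial \Omega|}{|\Omega|} \leq \frac{2b \cdot 4 \pi a}{\pi a b} = 8.
\]

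For the lower bound $c_2 > 9/8$, the plan is to exhibit a single concrete example. Take $\Omega$ to be the right triangle with vertices $(0,0), (1,0), (0,h)$ for a small $h > 0$, and take the linear (hence convex) function $f(x_1, x_2) = x_2$, which is nonnegative on $\partial \Omega$. The interior average equals $h/3$ (the centroid height), and a direct calculation gives the boundary average as
\[
\frac{h(h + \sqrt{1+h^2})}{2(1 + h + \sqrt{1 + h^2})},
\]
the bottom edge of length $1$ contributing $0$ while each of the two remaining edges contributes $O(h)$. The interior-to-boundary ratio is therefore $\tfrac{2}{3}(1 + h + \sqrt{1+h^2})/(h + \sqrt{1+h^2})$, which tends to $4/3$ as $h \to 0^+$; a short algebraic manipulation shows it strictly exceeds $9/8$ exactly for $h \in (0, 135/352)$, which gives the lower bound.

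The main obstacle -- and the reason for the large remaining gap -- is that this method does not locate the extremal configuration. The factor $2$ in John's theorem is attained only for triangles, while the degenerating right triangles used in the lower bound only approach $4/3$ in the limit; neither bound is naturally sharp. Closing the gap presumably requires identifying the correct extremal domain and function and a genuinely new argument that goes beyond Lemma~1 and John's theorem.
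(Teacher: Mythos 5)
Your argument is correct, and the two halves relate to the paper differently. The upper bound is essentially the paper's own proof: the paper likewise takes the John ellipse $A\subseteq\Omega\subseteq 2A$ with semi-axes $a\geq b$, applies Lemma~1 in the minor-axis direction to get $\int_\Omega f\,d\mathcal{H}^2\leq 2b\int_{\partial\Omega}f\,d\mathcal{H}^1$, and then bounds $\tfrac{2b|\partial\Omega|}{|\Omega|}\leq \tfrac{4b|\partial A|}{\pi ab}\leq 8$ using $|\partial A|\leq 2\pi a$ (obtained there from monotonicity of the complete elliptic integral rather than from your circumscribed-circle comparison -- the same estimate, and the paper's Proposition already bypasses Cauchy's formula, so nothing is gained or lost here). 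The lower bound is where you genuinely diverge: the paper uses a thin triangle $\{0\leq x\leq 1,\ |y|\leq ax\}$ with $f(x,y)=x$ and optimizes over $a$, arriving at $9/8$, whereas you use the right triangle with vertices $(0,0),(1,0),(0,h)$ and $f=x_2$, which vanishes on the long edge. Your computation is correct: the interior-to-boundary ratio is
\[
\frac{2\bigl(1+h+\sqrt{1+h^2}\bigr)}{3\bigl(h+\sqrt{1+h^2}\bigr)}\longrightarrow \frac{4}{3}\quad (h\to 0^+),
\]
and exceeds $9/8$ precisely for $h\in(0,135/352)$, so any fixed such $h$ gives the strict inequality $c_2>9/8$ cleanly (the paper's supremum is only attained in a limit). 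In fact your example proves more than the Proposition claims, namely $c_2\geq 4/3$; it is worth noting that the paper's own family, if one computes the ratio of the interior average to the boundary average (rather than its reciprocal, which is what the displayed expression $\tfrac{3}{2}\tfrac{\sqrt{1+a^2}+2a}{2\sqrt{1+a^2}+2a}$ amounts to), also tends to $4/3$ as $a\to 0$, so your stronger conclusion is consistent with, and improves on, the stated $9/8$. The only caveats are cosmetic: make sure the Radon--Nikodym bound is stated as half the chord length divided by the cosine factor $\sqrt{1+\tau'(s)^2}\geq 1$ (which only helps), and note that the degenerate vertical chords at the extreme abscissae form a set of measure zero, so they do not affect the push-forward estimate.
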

\begin{proof} The proof is more or less the same as the proof of Theorem 1 but we pay more attention to the constants. We assume that the John ellipsoid 
$$A \subseteq \Omega \subseteq 2 A$$ 
has semi-axes
$a$ and $b$ where $a \geq b$. Then we can apply Lemma 1 to conclude that
$$ \int_{\Omega}{f ~d \mathcal{H}^n} \leq 2b \int_{\partial \Omega}{f~d \mathcal{H}^{n-1}}.$$
This shows that the constant we are trying to bound is given by
$$ \frac{2b |\partial \Omega|}{|\Omega|} \leq \frac{4b |\partial A|}{|A|} = \frac{4b |\partial A|}{\pi a b} =  \frac{4 |\partial A|}{\pi a}.$$
The circumference of an ellipsoid is given by
$$ |\partial A| = 4 a \int_{0}^{\pi/2}{\sqrt{1- e^2 \sin{\theta}} d\theta} =  4a E\left(\sqrt{1- \frac{b^2}{a^2}}\right),$$
where $e$ is the eccentricity $\sqrt{1-b^2/a^2}$ and $E$ is the complete elliptic integral of the second kind which is monotonically decreasing.  Therefore
$ |\partial A| \leq 2 \pi a$ and thus
$$ \frac{2b |\partial \Omega|}{|\Omega|} \leq 8.$$  As for the lower bound, we consider the domain
$$ \Omega = \left\{(x,y) \in \mathbb{R}^2: 0 \leq x \leq 1 \wedge 0 \leq y \leq a x\right\}$$
and the function $f(x,y) = x$. We observe that
$$ \frac{1}{|\Omega|} \int_{\Omega}{x dx} = \frac{1}{a} \int_{0}^{a}{x 2a x dx} = \frac{2}{3}.$$
Moreover,
$$ \int_{\partial \Omega}{x dx} = \sqrt{1 + a^2}  + 2a \qquad \mbox{and} \qquad |\partial \Omega| = 2 \sqrt{1 + a^2} + 2a.$$
Altogether, this implies
$$ c_2 \geq \sup_{a > 0} \frac{3}{2}\frac{\sqrt{1 + a^2}  + 2a}{ 2 \sqrt{1 + a^2} + 2a} = \frac{9}{8}.$$
\end{proof}

We remark that the proof actually shows slightly more and we obtain the bound
$$ c_{\Omega} \leq  \frac{16}{\pi}  E\left(\sqrt{1- \frac{b^2}{a^2}}\right) \qquad \mbox{for convex domains}~\Omega \subset \mathbb{R}^2$$
whose John ellipsoid has semi-axes $a$ and $b$.

\section{Proof of Theorem 2}
\subsection{Green's function.}
 Let $\Omega \subset \mathbb{R}^n$ be sufficiently regular so that the Dirichlet problem
\begin{align*}
\Delta u &= 0 \qquad \mbox{in}~\Omega \\
u &= g \qquad \mbox{on}~\partial \Omega
\end{align*}
has a solution given by a Green's function $p:\Omega \times \partial \Omega \rightarrow \mathbb{R}_{\geq 0}$ via
$$ u(x) = \int_{\partial \Omega}{p(x,y) g(y) dy}.$$
Plugging in constant functions for $u$ shows that 
$$ \int_{\partial \Omega}{p(x,y) dy} = 1.$$
There exists a natural bound on the constant in the Hermite-Hadamard inequality for subharmonic functions that is determined by integrating out the other variable (this observation
is essentially contained in \cite{cal2} but phrased in a somewhat different language).
\begin{thm}[de la Cal, Carcamo \& Escauriaza \cite{cal2}] If $f \in C^2(\Omega)$ is subharmonic, then
$$ \int_{\Omega}{f~d \mathcal{H}^n} \leq \int_{\partial \Omega}{\left( \int_{\Omega}{p(x, y) dx} \right)f(y)~d\mathcal{H}^{n-1}}$$
with equality if and only if $f$ is harmonic.
\end{thm}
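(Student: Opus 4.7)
The natural approach is to split $f$ into its harmonic part and a nonpositive correction. Let $h$ denote the harmonic extension of the boundary values of $f$, i.e.\ the solution of the Dirichlet problem
\begin{align*}
\Delta h &= 0 \qquad \text{in } \Omega, \\
h &= f \qquad \text{on } \partial \Omega,
\end{align*}
so that $h(x) = \int_{\partial \Omega} p(x,y) f(y)\, d\mathcal{H}^{n-1}(y)$ by the definition of the Green's/Poisson kernel in the excerpt. Set $v := f - h$; then $v \in C^2(\Omega) \cap C(\overline{\Omega})$, $\Delta v = \Delta f \geq 0$ in $\Omega$, and $v \equiv 0$ on $\partial \Omega$.

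The first key step is to invoke the maximum principle for subharmonic functions: since $v$ is subharmonic and vanishes on $\partial \Omega$, it attains its maximum on the boundary, so $v(x) \leq 0$ for all $x \in \Omega$. Integrating this pointwise inequality over $\Omega$ gives
$$ \int_{\Omega} f \, d\mathcal{H}^n \;=\; \int_{\Omega} h \, d\mathcal{H}^n \;+\; \int_{\Omega} v \, d\mathcal{H}^n \;\leq\; \int_{\Omega} h \, d\mathcal{H}^n. $$

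The second step is to rewrite $\int_{\Omega} h$ using the Poisson representation and Fubini's theorem (justified because $p \geq 0$ and $\Omega$ has finite measure, so the double integral is well-defined with values in $[0,\infty]$):
$$ \int_{\Omega} h(x)\, d\mathcal{H}^n(x) \;=\; \int_{\Omega} \int_{\partial \Omega} p(x,y) f(y) \, d\mathcal{H}^{n-1}(y)\, d\mathcal{H}^n(x) \;=\; \int_{\partial \Omega} \left( \int_{\Omega} p(x,y) \, d\mathcal{H}^n(x) \right) f(y) \, d\mathcal{H}^{n-1}(y). $$
Combining the two displays yields the asserted inequality.

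For the equality statement, observe that $v \leq 0$ and $v$ is continuous, so $\int_{\Omega} v\, d\mathcal{H}^n = 0$ forces $v \equiv 0$ on $\Omega$, which in turn means $f = h$ is harmonic. Conversely, if $\Delta f = 0$, then uniqueness of the Dirichlet problem gives $f = h$ and the argument above becomes an equality throughout. I do not expect any real obstacle here: the only delicacies are (i) ensuring that $h$ exists and is continuous up to the boundary, which is covered by the standing assumption that $\Omega$ is ``sufficiently regular,'' and (ii) verifying the use of Fubini, which is automatic from positivity of $p$.
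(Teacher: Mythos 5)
Your argument is correct, but it is worth noting that the paper does not actually prove this statement itself: it is quoted as a result of de la Cal, Carcamo and Escauriaza, and the mechanism the paper has in mind (made explicit in Section 5) is a parabolic one --- solve the heat equation with initial and boundary data $f$, observe that $\int_{\Omega} u(t,x)\,dx$ is nondecreasing in $t$ because $f$ is subharmonic, and let $t \to \infty$, where $u(t,x)$ converges to the harmonic extension $\int_{\partial \Omega} p(x,y) f(y)\, d\mathcal{H}^{n-1}(y)$; equivalently, one can phrase this via Brownian motion stopped at the boundary. Your route is the direct elliptic one: write $f = h + v$ with $h$ the harmonic extension of $f\big|_{\partial \Omega}$, use the maximum principle to get $v \leq 0$, and finish with Fubini--Tonelli applied to the Poisson representation of $h$; the equality case follows since $v \leq 0$ is continuous, so $\int_{\Omega} v = 0$ forces $v \equiv 0$. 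This is more elementary and entirely adequate for the statement as given (the only hypotheses you quietly use --- that $f$ is continuous up to $\partial \Omega$ and that $\Omega$ is regular enough for the Poisson kernel representation and the maximum principle --- are exactly the paper's standing regularity assumptions). What the heat-flow formulation buys, and the reason the paper emphasizes it, is that it keeps $t$ as a free parameter: stopping at finite time $t$ rather than passing to the limit is precisely the generalization the paper exploits to prove Theorem 4, something your static decomposition does not directly provide.
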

We will use this bound to establish Theorem 2 and Theorem 3. Finally, we prove a generalization of this inequality in Section \S 5 and use it to prove Theorem 4.

\subsection{Proof of Theorem 2}

\begin{proof} The argument is a consequence of the inequality of de la Cal, Carcamo \& Escauriaza and a rigidity statement for simply connected domains in the plane: if $\Omega \subset \mathbb{R}^2$ is simply connected with a smooth boundary, then
$$ \max_{y \in \partial \Omega} \int_{\Omega}{p(x, y) dx} \leq c \cdot \mbox{inradius}(\Omega)$$
for some universal constant $c$.
This inequality, while possibly not being stated anywhere in particular, follows from a standard rigidity phenomenon in potential theory that is encountered in various other problems as well. We refer to Banuelos \cite{ban}  who lists four particular instances of the phenomenon and the associated bounds as well as to a paper of Rachh and the author \cite{manas} for implications for the Schr\"odinger equation. For a formal argument, it suffices to note that simply connected domains in $\mathbb{R}^2$ have the property that any $\delta-$disk around a point $x$ on the boundary satisfies
$$ \mathcal{H}^1(B(x, \delta) \cap \partial \Omega) \gtrsim \delta \qquad \mbox{for}~0 < \delta \leq \mbox{diam}(\Omega).$$
\begin{center}
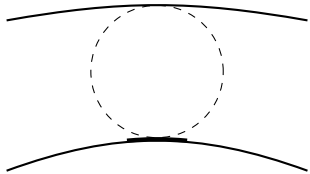
\begin{figure}[h!]
\begin{tikzpicture}[scale=2]
\draw [thick] (0,0) to[out=20, in=160] (2,0);
\draw [ultra thick] (0.8,0.2) to[out=5, in=175] (1.2,0.2);
\draw [thick] (0,1) to[out=10, in=170] (2,1);
\draw [dashed]  (1, 0.66) circle (0.44cm);
\end{tikzpicture}
\caption{Simply connected domains in $\mathbb{R}^2$ have the property that for every point on the boundary, the total amount of boundary in a $\delta-$disk is $\gtrsim \delta$ for all $\delta$ up to $\mbox{diam}(\Omega)$.}
\end{figure}
\end{center}

For a segment to be particularly accessible, we require that there is not a lot of other boundary nearby from which the result then follows by scaling. 
\end{proof}

It could be of interest to relate the constant $c$ to the other constants attached to other incarnations of the same phenomenon (see \cite{ban}). We do not know what an extremizing domain could look like. A natural candidate is given in Fig. 3 (the largest density being assumed at the tip of the slit). 
The same rigidity phenomenon should also governs the gradient of the torsion function. More precisely, it could be of interest to determine whether for a simply connected domain $\Omega \subset \mathbb{R}^2$ the solution of 
$$
\Delta \phi = 1~ \mbox{in}~\Omega \qquad \mbox{and} \qquad \phi = 0 ~ \mbox{on}~\partial \Omega
$$
satisfies $\|\nabla u\|_{L^{\infty}} \leq c \cdot \mbox{inradius}(\Omega)$ and to find the domain for which the constant is extremal. This would allow for another proof of Theorem 2 via the inequality of Niculescu \& Persson \cite{choquet}.
 We emphasize that these types of questions tend to come in two parts: existence of a constant (which is usually not so difficult) and establishing the sharp value (which, especially for this problems of this type, is notoriously hard and often tied to some very elusive constants in classical complex analysis, see \cite{ban1, ban2, car1}).

\begin{center}
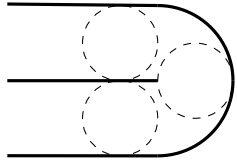
\begin{figure}[h!]
\begin{tikzpicture}[scale=2]
\draw [very thick] (1,0.001) to[out=0, in=180] (2,0);
   \draw [very thick,domain=270:360] plot ({2+0.5*cos(\x)}, {0.5+0.5*sin(\x)});
   \draw [very thick,domain=0:90] plot ({2+0.5*cos(\x)}, {0.5+0.5*sin(\x)});
\draw [very thick] (2,1) -- (1,1.01);
\draw [very thick] (1,0.5) -- (2,0.5);
\draw [dashed] (2.25, 0.5) circle (0.25cm);
\draw [dashed] (1.75, 0.75) circle (0.25cm);
\draw [dashed] (1.75, 0.25) circle (0.25cm);
\end{tikzpicture}
\caption{A possible candidate for an extremizing domain.}
\end{figure}
\end{center}

\section{Proof of Theorem 3}

\begin{proof} The argument combines various ideas and estimates developed in the arguments above. In particular, we will make use of the estimate, valid for subharmonic functions satisfying $f \big|_{\partial \Omega} \geq 0$,

$$ \int_{\Omega}{f~d \mathcal{H}^n} \leq \int_{\partial \Omega}{\left( \int_{\Omega}{p(x, y) dx} \right)f(y)~d\mathcal{H}^{n-1}} \leq \left( \max_{y \in \partial \Omega}{  \int_{\Omega}{p(x, y) dx}} \right)\int_{\partial \Omega}{f~d\mathcal{H}^{n-1}} $$
and will establish the desired result by showing that for convex $\Omega \subset \mathbb{R}^n$
$$ \max_{y \in \partial \Omega}{  \int_{\Omega}{p(x, y) dx}} \lesssim |\Omega|^{\frac{1}{n}}.$$
After fixing an arbitrary point $y \in \partial \Omega$, we translate and rotate $\Omega$ in such a way that $y = 0$ w.l.o.g. and the entire domain $\Omega$ is contained in $\left\{x_n \geq 0 \right\}$.
By domain monotonicity, we can bound the Green's function associated to $\Omega$ by
$$ p_{\Omega}(x,0) \leq p_{\mathbb{R}^{n-1} \times \mathbb{R}_{\geq 0}}(x,0) = \Gamma\left(\frac{n}{2}\right) \frac{1}{\pi^{n/2}} \frac{x_n}{\|x\|^n}$$
which is simply the classical Poisson kernel on the upper half space. 

\begin{center}
\begin{figure}[h!]
\begin{tikzpicture}
\draw [very thick] [rotate=45] (0,0) ellipse (2cm and 1cm);
\filldraw (-0.85,-1.57) circle (0.06cm);
\draw [<->, thick] (-3, -1.57) -- (2, -1.57);
\node at (-0.9, -1.8) {$y = 0$};
\node at (2, -1.8) {$\mathbb{R}^{n-1}$};
\draw [thick, ->] (-0.85, -1.57) -- (-0.85, 2);
\node at (-0.6, 1.8) {$\mathbb{R}$};
\draw [dashed] (-3, 0) -- (2,0);
\draw [dashed] (-3, 1) -- (2,1);
\draw [dashed] (-3, -1) -- (2,-1);
\draw [<->] (4,-1.57) -- (4,2);
\node at (4.4, 0) {$a$};
\end{tikzpicture}
\caption{The domain $\Omega$ before Schwarz symmetrization.}
\end{figure}
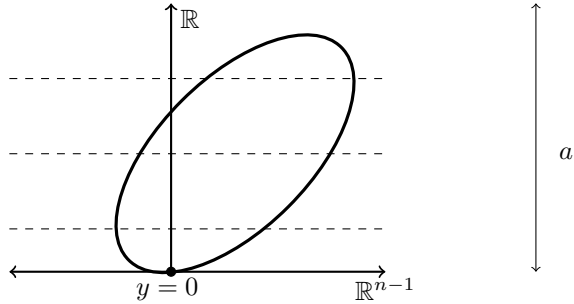
\end{center}

We write $x_{n-1} = (x_1, \dots, x_{n-1}, 0)$ and estimate
$$  \Gamma\left(\frac{n}{2}\right) \frac{1}{\pi^{n/2}} \int_{\Omega}{p(x, 0) dx} = \Gamma\left(\frac{n}{2}\right) \frac{1}{\pi^{n/2}}  \int_{\Omega}{ \frac{x_n}{(x_n^2 + \|x_{n-1}\|^2)^{n/2}} d x_{n-1} dx_{n}}.$$ 
We use Fubini's theorem and estimate the integral on slices $\left\{x_n = \mbox{const}\right\}$. The classical rearrangement inequality implies that
$$ \mbox{the expression} \qquad \int_{\Omega \cap \left\{x_n ={\tiny \mbox{const}}\right\}}{ \frac{x_n}{(x_n^2 + \|x_{n-1}\|^2)^{n/2}} d x_{n-1}} \qquad \mbox{is maximized}$$
 if $\Omega$ is an $(n-1)-$dimensional ball centered around $(0_{n-1}, \mbox{const})$ having the same $(n-1)-$dimensional volume as $\Omega \cap \left\{x_n = \mbox{const}\right\}$. 
We can thus switch to polar coordinates and bound
$$ \int_{\Omega}{p(x, 0) dx} \lesssim \int_{0}^{a}{ \int_{0}^{ c_n \left| \Omega \cap \left\{x_n = {\tiny \mbox{const}}\right\} \right|^{1/(n-1)}}{ \frac{x_n \cdot \|x_{n-1}\|^{n-2}}{(x_n^2 + \|x_{n-1}\|^2)^{n/2}}  d x_{n-1} dx_{n}}},$$
where $c_n$ depends only on the dimension.
We exchange the order of integration, let $a \rightarrow \infty$ and use that, for all $c>0$,
$$ \int_{0}^{\infty}{ \frac{ b c^{n-2}}{(b^2+ c^2)^{n/2}} db} = \frac{1}{n-2}$$
to bound
$$ \int_{\Omega}{p(x, 0) dx} \lesssim  \sup_{c > 0} \left| \Omega \cap \left\{x_n = c\right\} \right|^{\frac{1}{n-1}}.$$
Let us now use $\Omega^*$ to denote the domain obtained from the rearrangement process described above where $(n-1)-$dimensional slices are being replaced by $(n-1)-$dimensional balls centered around the $x_n-$axis. This is merely the Schwarz symmetrization and is known to preserves convexity (this is a consequence of the Brunn-Minkowski inequality, see Berger \cite{berger}). In particular, $\Omega^*$ is convex, it is radial around $x_n$ and has height $a$. Moreover, by construction, for all $c>0$
$$  \left| \Omega \cap \left\{x_n = c\right\} \right|^{\frac{1}{n-1}} = \left| \Omega^* \cap \left\{x_n = c\right\} \right|^{\frac{1}{n-1}}.$$
This shows that $\Omega^*$ is contained in the cylinder
$$ \Omega^* \subset \left\{ (x_{n-1}, x): \|x_{n-1}\| \leq  c_n\left(\sup_{c > 0} \left| \Omega \cap \left\{x_n = c\right\} \right|\right)^{\frac{1}{n-1}} ~\mbox{and}~ 0 \leq x_n \leq a \right\},$$
where $c_n$ is yet another constant depending only on the dimension. 
In particular, $\Omega^*$ does not fit into any smaller cylinder. 
By construction and convexity of $\Omega^*$, this shows that
$$ |\Omega| = |\Omega^*| \gtrsim_n  \left(\sup_{c  > 0} \left| \Omega \cap \left\{x_n = c\right\} \right|\right)^{}  a,$$
where the implicit constant depends only on the dimension. We now prove, somewhat independently, the bound
$$ \int_{\Omega}{p(x, 0) dx} \leq a.$$
This is fairly easy to see: we bound the integral from above by replacing the domain $\Omega$ by the much bigger domain 
$ \Omega \subset \left\{(x_{n-1}, x_n): 0 \leq x_n \leq a\right\}.$ Translational invariance (or direct computation) show that the integral simplifies exactly to $a$. Altogether, this means that we have two estimates
that can be combined in an interpolatory fashion
\begin{align*}
  \int_{\Omega}{p(x, 0) dx} &\lesssim \min\left\{  \sup_{c > 0} \left| \Omega^* \cap \left\{x_n = c\right\} \right|^{\frac{1}{n-1}}, a \right\} \\
&\leq \left( \sup_{c > 0} \left| \Omega^* \cap \left\{x_n = c\right\} \right|^{\frac{1}{n-1}} \right)^{\frac{n-1}{n}} a^{\frac{1}{n}} \lesssim |\Omega|^{\frac{1}{n}}.
\end{align*}
\end{proof}

\section{Proof of Theorem 4}
\subsection{Idea.} The idea behind the proof of Theorem 4 is to make use of the implicit monotonicity formula that underlies the inequality of de la Cal, Carcamo \& Escauriaza \cite{cal2} and exploit time as
an additional parameter. More precisely, let $\Omega \subset \mathbb{R}^n$ be a domain with smooth boundary and let $f:\Omega \rightarrow \mathbb{R}$ be a subharmonic function.
The heat equation
\begin{align*}
 \frac{\partial}{\partial t}u(t,x) - \Delta u(t,x) &= 0 ~ \mbox{in}~\Omega\\
u(t, x) &= f(x)~\mbox{for}~x \in \partial \Omega\\
u(0,x) &= f(x).
\end{align*}
has the property that
$$ \frac{\partial}{\partial t} \int_{\Omega}{ u(t,x) dx} \geq 0.$$
Moreover, for any $x \in \Omega$, we have
$$ \lim_{t \rightarrow \infty}{ u(t,x) } = \int_{\partial \Omega}{p(x, y)f(y)~d\mathcal{H}^{n-1}}.$$
We will now use the same principle for a finite $0 < t < \infty$ to deduce Hermite-Hadamard inequalities.
 By standard theory, we know that we can write the solution of the heat equation as
$$ u(t,x) = \int_{\Omega}{ p_t(x,y)f(y) dy} + \int_{\partial \Omega}{ q_t(x,z) f(z) dz},$$
where, for all $x \in \Omega$ and all $t > 0$
$$ \int_{\Omega}{ p_t(x,y) dy} + \int_{\partial \Omega}{ q_t(x,z) dz} = 1.$$
The main idea is as follows: if we can find a time $t>0$ such that
$$ \max_{x \in \Omega}\int_{\Omega}{ p_t(x,y) dy} \leq 1 - \delta \qquad \mbox{as well as} \qquad  \max_{z \in \partial \Omega}\int_{\Omega}{q_t(x,z) dx} \leq \eta,$$
then we can use the identity $p_t(x,y) = p_t(y,x)$ to estimate
\begin{align*}
\int_{\Omega}{ f(x) dx} &\leq \int_{\Omega}{ u(t,x) dx}   = \int_{\Omega}{  \int_{\Omega}{ p_t(x,y)f(y) dy} dx} +  \int_{\Omega}{\int_{\partial \Omega}{ q_t(x,z) f(z) dz}dx}  \\
&= \int_{\Omega}{  \int_{\Omega}{ p_t(y,x)f(y) dy} dx} +  \int_{\Omega}{\int_{\partial \Omega}{ q_t(x,z) f(z) dz}dx}  \\
&\leq \left(1 -  \delta \right)\int_{\Omega}{ f(x) dx} +  \eta\int_{\partial \Omega}{  f(z) dz}
\end{align*}
which implies a Hermite-Hadamard inequality with constant $\eta \delta^{-1}$.

\subsection{Proof of Theorem 4}
\begin{proof}
We start by showing a fairly standard isoperimetric estimate (that is, for example, also used in \cite{alex}): for every $t > 0$
$$ \max_{x \in \Omega}  \int_{\Omega}{ p_t(x,y) dy} \leq 1 - c_n^{-1} |\Omega|^{\frac{n-2}{n}} t^{1 - \frac{n}{2}} e^{-c_n^2 |\Omega|^{2/n}t^{-1}},$$
where the constant $c_n$ depends only on the dimension (and may change its value from line to line). This is done via a probabilistic interpretation: the integral
$$  \int_{\Omega}{ p_t(x,y) dy} \qquad \mbox{has an interpretation}$$
as the likelihood of a Brownian motion started in $x$ never touching the boundary $\partial \Omega$ for all points in time $0 < t^* < t$.
We can bound the likelihood of this event from above by bounding the likelihood of its negation from below.
The likelihood of Brownian motion being outside $\Omega$ at some point $0 < t^* < t$ is certainly larger than the likelihood of it being outside at time $t$ (because that
would imply it touching the boundary at some intermediate point). Then, however, the classical rearragement inequality implies that
this likelihood is minimized by having $\Omega$ be a ball centered around $x$. That likelihood can be written as
$$ \int_{\mathbb{R}^n \setminus B(0, c_n|\Omega|^{1/n})}{ \frac{1}{(4 \pi t)^{n/2} } e^{- \frac{\|x\|^2}{4t}} dx} =
\frac{1}{(4 \pi t)^{n/2}} \int_{c_n |\Omega|^{1/n}}^{\infty}{ e^{-\frac{r^2}{4t}} r^{n-1} dr}.
$$
This integral is fairly easy to bound from below since $n \geq 2$ and thus
\begin{align*}
\frac{1}{(4 \pi t)^{n/2}} \int_{c_n |\Omega|^{1/n}}^{\infty}{ e^{-\frac{r^2}{4t}} r^{n-1} dr} &\geq \frac{c_n^{n-2} |\Omega|^{\frac{n-2}{n}}}{(4 \pi t)^{n/2}} \int_{c_n |\Omega|^{1/n}}^{\infty}{ e^{-\frac{r^2}{4t}} r^{} dr}\\
&= \frac{c_n^{n-2} |\Omega|^{\frac{n-2}{n}}}{(4 \pi t)^{n/2}} 2t e^{-\frac{c_n^2 |\Omega|^{2/n}}{4t}} \\
&\gtrsim_n  |\Omega|^{\frac{n-2}{n}} t^{1 - \frac{n}{2}} e^{-c_n^2 |\Omega|^{2/n}t^{-1}}.
\end{align*}

At the same time, if $z \in \partial \Omega$ and $\partial \Omega$ is flat around $z$ at scale $\sqrt{t}$, then the standard bounds on the heat kernel imply
$$ \int_{\Omega}{ q_t(x,z) dx} \lesssim \sqrt{t}$$
where the implicit constant depends again only on the dimension.
Arguing as above gives that
$$ \int_{\Omega}{ f(x) dx}  \lesssim  c_n^{} |\Omega|^{-\frac{n-2}{n}} t^{\frac{n-1}{2}} e^{c_n^2 |\Omega|^{2/n}t^{-1}} \int_{\partial \Omega}{ f(z) dz}.$$
This bound gets better if we choose $t$ as large as possible, the largest admissible choice is $t \sim \delta^2$ and this implies the result.
\end{proof}

\end{document}